\newcounter{cnt1}
\newcounter{cnt2}
\newcommand{\blr}{\begin{list}{$(\roman{cnt1})$}
    {\usecounter{cnt1} \setlength{\topsep}{0pt}
        \setlength{\itemsep}{0pt}}}
\newcommand{\bla}{\begin{list}{$($\alph{cnt2}$)$}
    {\usecounter{cnt2} \setlength{\topsep}{0pt}
        \setlength{\itemsep}{0pt}}}
\newcommand{\el}{\end{list}}
\newtheorem{thm}{Theorem}
\newtheorem{lem}[thm]{Lemma}
\newtheorem{cor}[thm]{Corollary}
\newtheorem{Def}[thm]{Definition}
\newtheorem{rem}[thm]{Remark}
\newcommand{\Rem}{\begin{rem} \rm}
\newcommand{\bdfn}{\begin{Def} \rm}
\newcommand{\edfn}{\end{Def}}
\begin{document}
\large
\title[Compact operators]{ Best approximation in spaces of compact operators }
\author[Rao]{T. S. S. R. K. Rao}
\address[T. S. S. R. K. Rao]
{Department of Mathematics\\
Ashoka University\\
Rai\\ India,
\textit{E-mail~:}
\textit{srin@fulbrightmail.org}}
\subjclass[2000]{Primary 41 A 50,  47 L 05, 46 B28, 46B25  }
 \keywords{
Compact operators, strongly proximinal subspaces, $L^1$-predual spaces,injective and projective tensor products
 } \maketitle
\begin{abstract}
Let ${\mathcal K}(X,Y)$ be the space of compact operators. For a proximinal subspace $Z \subset Y$, this paper deals with the question, when does every $Y$-valued compact operator admits a  $Z$-valued compact best approximation?  For any reflexive Banach space  $X$ and for a $L^1$-predual space $Y$, if $Z \subset Y$ is a strongly proximinal subspace of finite codimension, we show that ${\mathcal K}(X,Z)$ is a proximinal subspace of ${\mathcal K}(X,Y)$ under an additional condition on the position of ${\mathcal K}(X,Z)$. When $Y$ is a $c_0$-direct sum of finite dimensional spaces we achieve a strong transitivity result by showing that for any proximinal subspace of finite codimension
$Z \subset Y$,  every $Y$-valued bounded operator admits a best $Z$-valued compact approximation.
\end{abstract}
\section { Introduction}
Let $Y$ be a real Banach space. We recall that a closed subspace $Z\subset Y$, is said to be proximinal, if
for any $y\in Y$, there is a $z_0 \in Z$ such that $d(y,Z)= \|y-z_0\|$. For a proximinal subspace $Z \subset Y$, an interesting question in operator theory is to decide if $Z$-valued  operators form a proximinal subspace of the space of $Y$-valued  operators, for a fixed domain space $X$. In a dual space, if  $Z \subset Y^\ast$ is a weak$^\ast$-closed subspace, then using the weak$^\ast$-operator topology, one can see that the space of operators ${\mathcal L}(X,Z) \subset {\mathcal L}(X,Y^\ast)$ is a closed (weak$^\ast$-operator topology) and hence proximinal subspace. In spaces of compact operators, not many instances are known where best approximation can be achieved for all domain spaces.
\vskip 2em
For a compact Hausdorff space $K$, let $C(K,X)$ denote the space of continuous $X$-valued  functions, equipped with the supremum norm.
Using the canonical identification of $C(K, X^\ast)$ with ${\mathcal K}(X,C(K))$, it is known that if $H \subset X^\ast$ is a finite dimensional Chebyshev space(best approximations are unique), then ${\mathcal K}(H^\ast,C(K))$ is a proximinal subspace of ${\mathcal K}(X,C(K))$ ( \cite{LC} Corollary 2.2).
\vskip 2em
On the other hand for any finite dimensional subspace $H \subset L^1([0,1])$, keeping the canonical embedding of $L^1([0,1]) \subset L^\infty([0,1])^\ast$ in mind,  considering the inclusions,
  $${\mathcal K}(H^\ast,C([0,1])) = C([0,1],H) \subset C([0,1],L^1([0,1]))$$
  $$\subset {\mathcal K}(L^\infty([0,1]), C([0,1]).$$
we have that ${\mathcal K}(H^\ast,C([0,1])$ is not a proximinal subspace of
$${\mathcal K}(L^\infty([0,1], C([0,1])),$$ as $C([0,1],H)$ is not a proximinal subspace of $C([0,1],L^1([0,1])$. See \cite{LC} Theorem 2.6.
\vskip 2em
Not many results where proximinality is preserved among spaces of operators are known when one considers infinite dimensional subspaces. Let $X$ be a reflexive Banach space. Let $Y$ be a Banach space such that $Y^\ast$ is isometric to a $L^1(\mu)$ for a positive measure $\mu$. Such spaces are called Lindenstrauss spaces or $L^1$-predual spaces. These were extensively studied by J. Lindenstrauss (see \cite{L}, Chapter 7).
\vskip 2em
For any compact space $K$, $C(K)$ is a $L^1$-predual space. Also  $X$ is a $L^1$-predual space if and only if $X^{\ast\ast}$ is a $C(K)$-space.  Motivated by the above classical example, we consider the question, for a $L^1$-predual space $Y$ and for a finite codimensional proximinal subspace $Z \subset Y$, when is ${\mathcal K}(X,Z) \subset {\mathcal K}(X,Y)$ a proximinal subspace for all reflexive spaces $X$? We partially answer the question based on  a stronger notion of proximinality  (see Section 2) and the structure of ${\mathcal K}(X,Z)$.
\vskip 1em
We consider a discrete variation of $L^1$-predual spaces by considering Banach spaces $Y$ for which $Y^\ast = \bigoplus_1 Y_i$, for an infinite family $\{Y_i\}_{i \in I}$ of finite dimensional spaces. Here we achieve a stronger result by showing that for any reflexive Banach space $X$ with the approximation property and
for any finite codimensional strongly proximinal subspace $Z \subset Y$, ${\mathcal K}(X,Z)$ is a  proximinal subspace of ${\mathcal L}(X,Y)$.
\vskip 1em
The author thanks the communicating Editors and the referees, for their efficient handling of this article during the pandemic (submitted during the first wave and a corrected version submitted during the harsh second wave in India).
\section{Strongly proximinal subspaces}
In order to achieve best compact approximation, we need a stronger notion of proximinality. We recall an equivalent version of strong proximinality from
\cite{GI}.
For a proximinal subspace $Y \subset X$, for $x \in X$, let $P(x) = \{y \in Y: d(x,Y) = \|x-y\|\}$ denote the set of best approximants.
\vskip 1em
\begin{Def}
	
	A proximinal subspace $Y \subset X$ is said to be strongly proximinal, if for any $x \in X$, for any minimising sequence $\{y_n\}_{n \geq 1} \subset Y$, i.e, $d(x,Y) = \lim_n \|x-y_n\|$, there is a subsequence $\{y_{n_k}\}$ and a sequence $\{z_k\}_{k \geq 1} \subset P(x)$ such that $\|y_{n_k}-z_k\| \rightarrow 0$ as $k \rightarrow \infty$.
	
\end{Def}
\vskip 1em
\begin{rem}It is easy to see that any finite dimensional subspace is strongly proximinal. It was shown in
 \cite{R} that for a strongly proximinal subspace $Y \subset X$ and for any finite dimensional space $F \subset X$, $F+Y$ is a strongly proximinal subspace.
\end{rem}
In order to give a necessary condition for a closed subspace of finite codimension to be strongly proximinal, we recall the notion of strong subdifferentiability from (\cite{FP}).
\begin{Def} A non-zero vector, $x \in X$  is said to be a point of strong subdifferentiability  (in short,  $SSD$ point) if the one sided limit
 $\lim_{t \rightarrow 0^{+}} \frac{\|x+th\|-\|x\|}{t}$ exists uniformly for  $h\in B_X$, where $B_X$ denotes the closed unit ball of the Banach space $X$.
 \end{Def}
If $x^\ast \in X^\ast$ is a $SSD$ point, it was shown in \cite{FP},( Theorem 3.3) that $x^\ast$ attains its norm. Since any extreme point of the unit ball of $L^1(\mu)$ is given by $\pm \frac{\chi_{A}}{\mu(A)}$ for a $\mu$-atom $A$, it is easy to see that it is a $SSD$ point.
\vskip 1em
 We recall an interesting relation between these concepts, a geometric statement from \cite{GI} (Lemma1. 1 and Theorem 2.5) that in a dual space $X^\ast$, a non-zero vector  $x^\ast \in X^\ast$
 is a $SSD$ point of  $X^*$ if and only if $\ker(x^\ast)$ is a strongly proximinal subspace of $X$.
\begin{rem}
We note that for any strongly proximinal subspace of finite codimension $Y \subset X$, if $Z$ is a closed subspace such that $Y \subset Z \subset X$, then since $Z = F+Y$ for some finite dimensional space $F \subset X$, by Remark 2, $Z$ is also a strongly proximinal subspace of $X$. In particular for such a $Y$, $Y^\bot$ consists of $SSD$ points of $X^\ast$.
	
\end{rem}
We next state a result from (\cite{JR}) that describes $SSD$ points of a dual $L^1(\mu)$ space. For a $L^1$-predual space $X$, it is easy to see that if
$f_1,...,f_n$ are linearly independent extreme points of the dual unit ball, as each $f_i$ corresponds to a normalized characteristic function of a measure atom of the underlying measure space, the span of $\{f_1,...,f_n\}$ is isometric to  the discrete space $\ell^1(n)$ and $X^\ast = L^1(\mu) = \ell^1(n) \bigoplus_1 L^1(\nu)$, where $\nu$ lives on the complement of the union of the supports of the $f_i$'s.
\begin{lem}
	Let $X$ be a $L^1$ predual space. Any $SSD$ point of $X^\ast$ is contained in $F = span\{f_1,...,f_k\}$ for some finitely many independent vectors $f_1,...,f_k$ which are extreme points of the unit ball of $X^\ast$. Consequently $F$ is isometric to $\ell^1(k)$ and $X^\ast = F \bigoplus_1 N$ ($\ell^1$-sum) for some closed subspace $N \subset X^\ast$.
\end{lem}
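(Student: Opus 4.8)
The plan is to identify $X^\ast$ with $L^1(\mu)$ and to prove the crucial fact that an $SSD$ point $f$ must be supported, up to a null set, on only finitely many atoms of $\mu$; the two ``consequently'' assertions then follow at once from the $L^1$-decomposition recorded in the paragraph preceding the statement. Recall that the extreme points of the unit ball of $L^1(\mu)$ are exactly the normalized atoms $\pm\chi_A/\mu(A)$, so ``supported on finitely many atoms'' is the same as ``lying in the span of finitely many extreme points.''

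First I would write down the one-sided derivative of the $L^1$-norm. For $f\neq 0$ and $h\in L^1(\mu)$, the difference quotients $g_t(h)=\frac{1}{t}(\|f+th\|-\|f\|)$ decrease, by convexity, as $t\downarrow 0$ to
$$\tau(f,h)=\int_{\{f\neq 0\}}\operatorname{sgn}(f)\,h\,d\mu+\int_{\{f=0\}}|h|\,d\mu,$$
so that, by the definition of an $SSD$ point (\cite{FP}), $f$ being $SSD$ forces $\sup_{h\in B_{X^\ast}}|g_t(h)-\tau(f,h)|\ra 0$ as $t\ra 0^{+}$.

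The heart of the argument is the contrapositive: if $f$ is not supported on finitely many atoms, I would produce normalized directions $h_n$ and scales $t_n\ra 0^{+}$ with $|g_{t_n}(h_n)-\tau(f,h_n)|\geq 1$, contradicting the displayed uniform limit. There are two cases. If the non-atomic part of $\mu$ meets $\{f\neq 0\}$ in positive measure, I would fix a level set $B\ci\{f\neq 0\}$ on which $\de\leq|f|\leq C$ and $\mu|_B$ is non-atomic, choose $B_n\ci B$ with $\mu(B_n)=s_n\downarrow 0$, and take $h_n=-\operatorname{sgn}(f)\chi_{B_n}/s_n$ together with $t_n=2Cs_n$. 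A direct computation gives $\tau(f,h_n)=-1$ while $g_{t_n}(h_n)=1-\frac{1}{Cs_n}\int_{B_n}|f|\geq 0$, whence the error is at least $1$. If instead $f$ is supported on infinitely many atoms $A_1,A_2,\dots$ with $f|_{A_i}=c_i$, then $|c_i|\mu(A_i)\ra 0$, and the single-atom directions $h_n=-\operatorname{sgn}(c_n)\chi_{A_n}/\mu(A_n)$ with $t_n=2|c_n|\mu(A_n)\ra 0$ flip the sign of $f$ exactly on $A_n$, giving $g_{t_n}(h_n)=0$ and $\tau(f,h_n)=-1$, again an error of $1$.

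Having shown $f=\sum_{i=1}^k c_i\chi_{A_i}$ for finitely many atoms $A_i$, I would take $f_i=\chi_{A_i}/\mu(A_i)$ (extreme points of $B_{X^\ast}$), so that $f\in F=\operatorname{span}\{f_1,\dots,f_k\}$. The remaining assertions are exactly the facts recalled before the statement: since the $A_i$ are disjoint atoms, $\|\sum a_i f_i\|=\sum|a_i|$, so $F$ is isometric to $\ell^1(k)$, and the same disjointness yields the $\ell^1$-splitting $X^\ast=L^1(\mu)=F\bigoplus_1 N$ with $N=\{g:g|_{\u_i A_i}=0\}$. The main obstacle I anticipate is the non-atomic case: one must simultaneously shrink $\mu(B_n)$ to $0$ (to push $t_n\ra 0$) while keeping a sign flip forced on all of $B_n$ and a uniform lower bound on the resulting error, which is precisely what the level-set bounds $\de\leq|f|\leq C$ and the non-atomicity of $\mu|_B$ secure.
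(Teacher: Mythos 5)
Your proof is correct. There is, however, nothing in the paper to compare it against line by line: the paper does not prove Lemma 5 at all, but imports it from \cite{JR} with only the surrounding remarks (extreme points of $B_{L^1(\mu)}$ are normalized atoms, and the resulting $\ell^1$-splitting) supplied as context. Your argument is a self-contained substitute, and it checks out: the formula for $\tau(f,h)$ follows from convexity plus dominated convergence (the quotients are dominated by $|h|$); in the non-atomic case the choice $t_n=2Cs_n$ forces a full sign flip on $B_n$ so that $g_{t_n}(h_n)=1-\frac{1}{Cs_n}\int_{B_n}|f|\in[0,1]$ while $\tau(f,h_n)=-1$; and in the infinite-atom case $t_n=2|c_n|\mu(A_n)\rightarrow 0$ because $\sum|c_i|\mu(A_i)<\infty$, with $g_{t_n}(h_n)=0$ exactly. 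The two cases do exhaust the negation of ``supported on finitely many atoms,'' and the concluding identifications ($F\cong\ell^1(k)$ via disjointness of the atoms, $N=\{g: g|_{\cup A_i}=0\}$) are the standard facts the paper already records before the lemma. Two cosmetic points: the lower bound $\delta\leq|f|$ on $B$ is never actually used (only $|f|\leq C$ matters, to guarantee the sign flip), and one should say explicitly that an $L^1$ function is a.e.\ constant on an atom so that ``supported on finitely many atoms'' really does place $f$ in $\mathrm{span}\{\chi_{A_i}/\mu(A_i)\}$. What your route buys is independence from the external reference: it uses only the first-order behaviour of the $L^1$-norm and does not even invoke the norm-attainment theorem of \cite{FP} that the paper quotes, at the modest cost of working throughout in a fixed representation $X^\ast\cong L^1(\mu)$ (legitimate, since strong subdifferentiability is an isometric property of the norm of $X^\ast$ and does not involve the predual).
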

Let $X$ be a reflexive Banach space with the metric approximation property. Let $I$ be an infinite index set. Let $\{Y_i\}_{i \in I}$ be a family of finite dimensional Banach spaces. Let $Y$ be a Banach space such that $Y^\ast = \bigoplus_1 Y_i$. Let $Z \subset {\mathcal K}(X,Y)$ be a closed subspace such that $Z^\bot$ consists of $SSD$  points of ${\mathcal K}(X,Y)^\ast$. We show that $Z$ is a proximinal, factor reflexive subspace of ${\mathcal K}(X,Y)$.
We could achieve proximinality in ${\mathcal L}(X,Y)$ when $Y = \bigoplus_{c_0} Y_i^\ast$.  Let $Z \subset Y$ be a proximinal subspace of finite codimension. ${\mathcal K}(X,Z)$ is a proximinal subspace of
${\mathcal L}(X,Y)$.

\vskip 1em
\section{Main Results}
In order to study the compact approximation problem, we use the identification of space of compact operators as injective tensor product spaces. The monographs by Ryan (\cite{RR}), Diestel and Uhl (\cite{DU}, Chapter VIII) are standard references.
\vskip 1em
We will be using two classical results from the theory of best approximations from closed subspaces. One is a classical result of Garkavi (\cite{S} Chapter 1, Theorem 2.1) that says, if $Z_1 \subset Z_2 \subset Y$ are closed subspaces such that $Y/Z_1$ is reflexive and $Z_1$ is a proximinal subspace of $Y$, then $Z_2$ is also a proximinal subspace of $Y$.
\vskip 1em
The second result we need due to Alfsen and Effros (see Proposition II.1.1 \cite{HWW}). Let $Z \subset Y$ be a closed subspace such that there is a linear
projection $P: Y^\ast \rightarrow Y^\ast$ such that $ker(P)= Z^\bot$ and $\|y^\ast\|= \|P(y^\ast)\|+\|y^\ast-P(y^\ast)\|$ for all $y^\ast \in Y^\ast$ (such
a subspace $Z$ is called a $M$-ideal in $Y$), then $Z$ is a proximinal subspace of $Y$. Over the years there have been several simpler proofs of this result, using ideas  substantially different from those in \cite{HWW}, Chapter 1. See the survey article \cite{BR} and \cite{R1} where we also gave a simple proof
of the result, $M$-ideals are proximinal and strongly proximinal subspaces.
\vskip 1em
In what follows we assume that $Y^\ast = L^1(\mu)$ for a positive measure $\mu$. When $\mu$ is non-atomic we need an additional condition on the position of ${\mathcal K}(X,Z)$. Let $Z \subset Y$ be a strongly proximinal  subspace of finite codimension. From Lemma 5, we assume that $k$ is the smallest integer
such that $Z^\bot \subset \ell^1(k)$.
\begin{thm}
Let $X$ be a reflexive Banach space and let $Y$ be a $L^1$-predual space. Let $Z \subset Y$ be a  finite codimensional strongly proximinal subspace. Suppose ${\mathcal K}(X,Z)^\bot \subset X^\ast {\otimes^{\vee}_{\pi} }\ell^1(k)$ . Then ${\mathcal K}(X,Z)$ is a proximinal subspace of ${\mathcal K}(X,Y)$.	
\end{thm}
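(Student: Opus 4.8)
The plan is to deduce proximinality of $\mathcal{K}(X,Z)$ from an $M$-ideal sitting below it, together with Garkavi's theorem. Since $Z$ is a strongly proximinal subspace of finite codimension in the $L^1$-predual $Y$, the remark preceding Lemma 5 shows that $Z^\bot$ consists of $SSD$ points of $Y^\ast$, and Lemma 5 then places $Z^\bot$ inside the atomic part: with $k$ minimal we have $Z^\bot \subset \ell^1(k)$ and an $\ell^1$-decomposition $Y^\ast = \ell^1(k) \bigoplus_1 N$. I would introduce the auxiliary subspace $W = {}^{\bot}\ell^1(k) \subset Y$, a closed subspace of codimension $k$ with $W^\bot = \ell^1(k)$; since $Z^\bot \subset \ell^1(k) = W^\bot$, one has $W \subset Z \subset Y$. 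As $\ell^1(k)$ is exactly the kernel of the $L$-projection of $Y^\ast$ onto $N$, the Alfsen--Effros criterion recalled above shows that $W$ is an $M$-ideal, hence a proximinal subspace of $Y$.

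Next I would pass to operators and transport this $M$-ideal structure. Viewing $\mathcal{K}(X,Y)$ through the injective tensor product $X^\ast \otimes_{\vee} Y$, the atomic $L$-projection $P \colon Y^\ast \to \ell^1(k)$ induces, by post-composition $\Phi \mapsto P\circ\Phi$, a natural projection $\mathcal{P}$ on $\mathcal{K}(X,Y)^\ast$. The relevant annihilators are computed from exact sequences and are of finite type in the range variable: $\mathcal{K}(X,W)^\bot \cong \mathcal{K}(X,Y/W)^\ast$ is precisely the $\ell^1(k)$-part $X^\ast\otimes^{\vee}_{\pi}\ell^1(k)$ of $\mathcal{K}(X,Y)^\ast$, while the standing hypothesis guarantees $\mathcal{K}(X,Z)^\bot \subset X^\ast\otimes^{\vee}_{\pi}\ell^1(k)$. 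Granting that $X^\ast\otimes^{\vee}_{\pi}\ell^1(k)$ is an $L$-summand of $\mathcal{K}(X,Y)^\ast$ (equivalently, that $\mathcal{P}$ is an $L$-projection), it follows that $\mathcal{K}(X,W) = {}^{\bot}(X^\ast\otimes^{\vee}_{\pi}\ell^1(k))$ is an $M$-ideal in $\mathcal{K}(X,Y)$, and hence proximinal by Alfsen--Effros.

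Finally I would apply Garkavi's theorem to the chain $\mathcal{K}(X,W) \subset \mathcal{K}(X,Z) \subset \mathcal{K}(X,Y)$. Because $Y/W$ is $k$-dimensional, the quotient map $q\colon Y \to Y/W$ induces a surjection $\mathcal{K}(X,Y) \to \mathcal{K}(X,Y/W)$ with kernel $\mathcal{K}(X,W)$, whence $\mathcal{K}(X,Y)/\mathcal{K}(X,W) \cong \mathcal{K}(X,Y/W) = \mathcal{L}(X,Y/W) \cong (X^\ast)^k$, which is reflexive since $X$ is reflexive. Thus $\mathcal{K}(X,W)$ is a proximinal subspace of $\mathcal{K}(X,Y)$ with reflexive quotient, and Garkavi's theorem yields that the intermediate subspace $\mathcal{K}(X,Z)$ is proximinal in $\mathcal{K}(X,Y)$, as required.

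The main obstacle is the middle step: verifying that the atomic $L$-summand $\ell^1(k) \subset Y^\ast$ genuinely lifts to an $L$-summand of the full dual $\mathcal{K}(X,Y)^\ast$. When $X$ lacks the approximation property this dual is the space of integral forms, strictly larger than $X^\ast \otimes_\pi Y^\ast$, and one must check the norm identity $\|\Phi\| = \|\mathcal{P}\Phi\| + \|\Phi - \mathcal{P}\Phi\|$ there rather than merely on elementary tensors. This is exactly where the position hypothesis $\mathcal{K}(X,Z)^\bot \subset X^\ast\otimes^{\vee}_{\pi}\ell^1(k)$ is needed: it confines the annihilator that must be split off to the well-behaved projective-tensor part, so that the $L$-decomposition can be established on the piece that actually governs best approximation to $\mathcal{K}(X,Z)$.
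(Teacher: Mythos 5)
Your proposal is correct and follows essentially the same route as the paper: decompose $Y^\ast = \ell^1(k) \bigoplus_1 N$ via Lemma 5, pass to the $M$-ideal $\mathcal{K}(X,W)$ (the paper's subspace $M$, defined directly as the pre-annihilator of $X^\ast {\otimes^{\vee}_{\pi}} \ell^1(k)$), and finish with Garkavi's theorem applied to the chain $M \subset \mathcal{K}(X,Z) \subset \mathcal{K}(X,Y)$. The one step you leave hedged --- that $X^\ast {\otimes^{\vee}_{\pi}} \ell^1(k)$ is an $L$-summand of $\mathcal{K}(X,Y)^\ast$ --- the paper settles by invoking the metric approximation property of $Y^\ast = L^1(\mu)$ (not of $X$, so your worry about $X$ lacking the approximation property does not arise) to identify $\mathcal{K}(X,Y)^\ast$ with the projective tensor product, and then using the distributivity of the projective tensor product over $\ell^1$-sums.
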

\begin{proof}
It is easy to see that ${\mathcal K }(X,Z)$ is a subspace of finite codimension of ${\mathcal K}(X,Y)$. Since $X$ is reflexive and as $Y^\ast = L^1(\mu)$ has the metric approximation property, we have ${\mathcal K}(X,Y)^\ast = X^\ast  {\otimes_{\pi}^{\vee}} Y^\ast$.
\vskip 1em
Since $Z$ is a finite codimensional strongly proximinal subspace of $Y$, from Remark 4 and the comments preceding it, we get that $Z^\bot$ consists of $SSD$ points of $Y^\ast$. Therefore by Lemma 5, let $k$ be the smallest positive integer such that $Z^\bot \subset \ell^1(k)$ and $Y^\ast = \ell^1(k) \bigoplus_1 N$, for some closed subspace of $N \subset Y^\ast$.
\vskip 1em
Since $\ell^1$-sum is distributive for projective tensor products,
we have $X^\ast {\otimes_{\pi}^{\vee}} Y^\ast = X^\ast {\otimes_{\pi}^{\vee}} \ell^1(k) \bigoplus_1 X^\ast {\otimes_{\pi}^{\vee}}N$.
Since $X$ is reflexive, $X^\ast {\otimes_{\pi}^{\vee}} \ell^1(k)$ is reflexive and hence a weak$^\ast$-closed subspace of ${\mathcal K}(X,Y)^\ast$. Let $M = \{T \in {\mathcal K}(X,Y): \tau(T)=0~for~all~\tau \in X^\ast {\otimes_{\pi}^{\vee}} \ell^1(k)\}$.
\vskip 1em
As $M^\bot = X^\ast {\otimes^{\vee}_{\pi} }\ell^1(k)$, $M$ is a $M$-ideal in ${\mathcal K}(X,Y)$ and hence
 proximinal subspace of ${\mathcal K}(X,Y)$. We next show that $M \subset {\mathcal K}(X,Z) \subset {\mathcal K}(X,Y)$.
\vskip 1em
We will show that ${\mathcal K}(X,Z)^\bot  \subset M^\bot$.
  Now  by hypothesis ${\mathcal K}(X,Z)^\bot \subset M^\bot$ and therefore $M \subset {\mathcal K}(X,Z) \subset {\mathcal K}(X,Y)$.
\vskip 1em
Since $M$ is a proximinal subspace of ${\mathcal K}(X,Y)$ and ${\mathcal K}(X,Z)/M$ is a reflexive space, by Garkavi's theorem (\cite{S} Chapter 1, Theorem 2.1)
we get that ${\mathcal K}(X,Z)$ is a proximinal subspace of ${\mathcal K}(X,Y)$.
\end{proof}
We recall that for a closed subspace $Z \subset Y$ of finite codimension $n$, there exists independent unit vectors $f_1,...,f_n \in Z^\bot$
such that $Z = \bigcap_1^n Ker(f_i)$. In what follows we apply the preceding arguments to both $Y$ and $Y^{\ast\ast}$. Since $X$ is a reflexive space, for  $T \in {\mathcal K}(X,Y)$, $T^{\ast\ast} = T \in {\mathcal K}(X, Y)$. For a $L^1$-predual space $Y$, we also note that since all the duals of even higher order are spaces of continuous functions, the assumptions of the following theorem thus apply for all duals of higher even order of $Y$ (we always consider the canonical embeddings).
\begin{thm} Let $X$ , $Y$ satisfy all the  assumption made in the statement of  Theorem 6. Let $T \in {\mathcal K}(X,Y^{\ast\ast})$. $d(T, {\mathcal K}(X,Z^{\bot\bot}))= \|T -S\|$ for some $S
\in {\mathcal K}(X,Z^{\bot\bot})$.
\end{thm}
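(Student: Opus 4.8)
The plan is to apply Theorem 6 verbatim, but with $Y$ replaced by its bidual $Y^{\ast\ast}$ and $Z$ replaced by $Z^{\bot\bot}$; its conclusion is then that ${\mathcal K}(X,Z^{\bot\bot})$ is a proximinal subspace of ${\mathcal K}(X,Y^{\ast\ast})$, which is exactly the asserted existence of $S$. The bulk of the work is therefore to check that the triple $(X,Y^{\ast\ast},Z^{\bot\bot})$ satisfies every hypothesis of Theorem 6. First, $X$ is unchanged and still reflexive. Since $Y$ is an $L^1$-predual, $Y^{\ast\ast}$ is a $C(K)$-space and hence again an $L^1$-predual, so $(Y^{\ast\ast})^\ast = Y^{\ast\ast\ast}$ is an $L^1(\tilde\mu)$-space with the metric approximation property. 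Next, because $Z$ has finite codimension $n$ in $Y$, its annihilator $Z^\bot = \mathrm{span}\{f_1,\dots,f_n\}$ is $n$-dimensional; the triple-annihilator identity gives $(Z^{\bot\bot})^\bot = Z^\bot$ inside $Y^{\ast\ast\ast}$, so $Z^{\bot\bot}$ is a subspace of $Y^{\ast\ast}$ of the same finite codimension $n$.

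The key step is to show that $Z^{\bot\bot}$ is strongly proximinal in $Y^{\ast\ast}$, and by the geometric characterization recalled from \cite{GI} together with Remark 4 this reduces to showing that $Z^\bot$, now viewed inside $Y^{\ast\ast\ast}$, consists of $SSD$ points. Here I would exploit the decomposition furnished by Lemma 5: strong proximinality of $Z$ yields an $L$-projection realising $Y^\ast = \ell^1(k) \bigoplus_1 N$ with $Z^\bot \subseteq \ell^1(k)$. Passing to the bidual, the bidual of an $L$-projection is again an $L$-projection (\cite{HWW}, Chapter I), and since $\ell^1(k)$ is finite-dimensional, hence reflexive, its range is unchanged; thus $Y^{\ast\ast\ast} = (Y^\ast)^{\ast\ast} = \ell^1(k) \bigoplus_1 N^{\ast\ast}$, with $\ell^1(k)$ sitting canonically as an $L$-summand and still containing $Z^\bot$. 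Finally, every element of a finite-dimensional $L$-summand of an $L^1$-space is an $SSD$ point: writing $w = (a,0)$ and $h = (h_1,h_2)$, the quotient $(\|w+th\|-\|w\|)/t$ splits as $(\|a+th_1\|_1 - \|a\|_1)/t + \|h_2\|$, where the first term converges uniformly over the ball (automatic in the finite-dimensional $\ell^1(k)$) and the second is already exact. Hence the vectors of $Z^\bot \subseteq \ell^1(k)$ are $SSD$ points of $Y^{\ast\ast\ast}$, giving the strong proximinality of $Z^{\bot\bot}$.

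It remains to verify the tensor-position hypothesis ${\mathcal K}(X,Z^{\bot\bot})^\bot \subseteq X^\ast \otimes_{\pi}^{\vee} \ell^1(k)$. Since $X$ is reflexive and $Y^{\ast\ast\ast}$ has the metric approximation property, ${\mathcal K}(X,Y^{\ast\ast})^\ast = X^\ast \otimes_{\pi}^{\vee} Y^{\ast\ast\ast}$, and the annihilator of the finite-codimensional subspace ${\mathcal K}(X,Z^{\bot\bot})$ is exactly $X^\ast \otimes_{\pi}^{\vee} Z^\bot$, the functionals $x^\ast \otimes f_i$ already exhausting it by a codimension count. As $Z^\bot \subseteq \ell^1(k)$ this lands inside $X^\ast \otimes_{\pi}^{\vee} \ell^1(k)$, and $k$ remains the minimal such integer because the canonical embedding of $Y^\ast$ into $Y^{\ast\ast\ast}$ is isometric and keeps the $k$ generating atoms independent. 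With all hypotheses in force, Theorem 6 produces the best approximant $S \in {\mathcal K}(X,Z^{\bot\bot})$. I expect the only genuine obstacle to be the strong-proximinality step of the previous paragraph: everything hinges on the stability of the $L$-decomposition under bidualisation, that is, on the finite-dimensional $L$-summand carrying $Z^\bot$ surviving intact in the third dual, so that the picture of Lemma 5 is reproduced one level up.
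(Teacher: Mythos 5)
Your overall strategy --- bootstrapping Theorem 6 by verifying its hypotheses for the triple $(X, Y^{\ast\ast}, Z^{\bot\bot})$ --- is exactly the paper's, and your treatment of the strong proximinality of $Z^{\bot\bot}$ is sound and in fact more detailed than the paper's: the paper invokes Kakutani's theorem to write $C(K)^\ast = Y^\ast \bigoplus_1 N$ and simply asserts that the $f_i$ remain $SSD$ points of $C(K)^\ast$, whereas you justify this through the bidual of the $L$-projection and an explicit uniform-limit computation on the finite-dimensional $\ell^1$-summand. That part of your argument is fine.

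The gap is in your verification of the tensor-position hypothesis. You claim that ${\mathcal K}(X,Z^{\bot\bot})^\bot = X^\ast \otimes_{\pi}^{\vee} Z^{\bot}$, with ``the functionals $x^\ast \otimes f_i$ already exhausting it by a codimension count.'' There is no such codimension count available: when $X$ is infinite dimensional, ${\mathcal K}(X,Z^{\bot\bot})$ has \emph{infinite} codimension in ${\mathcal K}(X,Y^{\ast\ast})$ (already the map $T \mapsto T^\ast f_1$ has infinite-dimensional range on rank-one operators), so the annihilator is infinite dimensional and nothing forces it to coincide with the closed span of the elementary tensors $x^\ast \otimes f_i$. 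Indeed, if the identity ${\mathcal K}(X,W)^\bot = X^\ast \otimes_{\pi}^{\vee} W^{\bot}$ held in general, the position hypothesis of Theorem 6 would be automatic and the author would not have needed to impose it; the obstruction is that identifying the annihilator of $X^\ast \otimes_{\epsilon} W$ with the tensor product of $X^\ast$ and $W^\bot$ amounts to saying that every integral bilinear form on $X^\ast \times W$ is nuclear, which requires Radon--Nikod\'ym-type conditions on $W^\ast \cong Y^\ast/W^\bot$ that fail for quotients of a non-atomic $L^1(\mu)$ by finite-dimensional subspaces. The paper sidesteps this entirely: from ${\mathcal K}(X,Z) \subset {\mathcal K}(X,Z^{\bot\bot}) \subset {\mathcal K}(X,C(K))$ it gets ${\mathcal K}(X,Z^{\bot\bot})^\bot \subset {\mathcal K}(X,Z)^\bot$ and then appeals to the containment ${\mathcal K}(X,Z)^\bot \subset X^\ast \otimes_{\pi}^{\vee} \ell^1(k)$ that is already \emph{assumed} in Theorem 6. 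Your proof should do the same --- deduce the position condition for $Z^{\bot\bot}$ from the hypothesis inherited from Theorem 6 rather than attempt to establish it from scratch.
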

\begin{proof}
Since $Y$ is a $L^1$-predual space, $Y^{\ast\ast}$ is isometric to $C(K)$ for some compact Hausdorff space $K$ (see \cite{L}, Chapter 7).
We note that $Z^{\bot\bot} = \bigcap_1^n ker(f_i)$. We have $f_i \in Y^\ast$ and are $SSD$ points. Also since $Y^\ast = L^1(\mu)$, by Kakutani's Theorem (see \cite{L}, Chapter 1), $Y^{\ast\ast\ast} = C(K)^\ast = Y^\ast \bigoplus_1 N$ for some closed subspace $N \subset C(K)^\ast$. It is easy to see that $f_i$'s continue to be points of  $SSD$ in $C(K)^\ast$. Thus we again have $Z^{\bot\bot\bot} =Z^{\bot} \subset \ell^1(k)$ and $C(K)^\ast = \ell^1(k) \bigoplus_1 N'$
for some closed subspace $N' \subset C(K)^\ast$. Also as ${\mathcal K}(X,Z) \subset {\mathcal K}(X, Z^{\bot\bot})\subset {\mathcal K}(X,C(K))$, we get
that ${\mathcal K}(X, Z^{\bot\bot})^\bot \subset {\mathcal K}(X, Z)^\bot \subset  X^\ast {\otimes^{\vee}_{\pi} }\ell^1(k)$, as in the hypothesis of Theorem 6. Now proceeding exactly as in the proof of Theorem 6, we conclude that ${\mathcal K}(X, Z^{\bot\bot})$ is a proximinal subspace of ${\mathcal K}(X,C(K))$.
\end{proof}
\section{Proximinal subspaces of finite codimension in ${\mathcal K}(X,Y)$}
Let $X$ be a reflexive Banach space having the metric approximation property.
In this section we describe a class of Banach spaces $Y$ for which proximinal subspaces in spaces of operators can be determined using $SSD$ points.
\vskip 1em
Let $\{Y_i\}_{i \in I}$ be an infinite family of finite dimensional Banach spaces.
Let  $Y$ be a Banach space such that $Y^\ast = \bigoplus_1 Y_i$.
For notational simplicity, we omit writing the fixed infinite index set $I$. This in particular covers the case where $Y$ is a $L^1$-predual space where $\mu$ is a discrete measure.
\vskip 1em
 We recall that a closed subspace $Z \subset Y$ is said to be a factor reflexive space, if $Y/Z$ is a reflexive space.
\begin{thm}
Let $Z \subset {\mathcal K}(X,Y)$ be a closed subspace such that $Z^\bot$ consists of $SSD$ points. Then $Z$ is a factor reflexive, proximinal subspace of ${\mathcal K}(X,Y)$.	
\end{thm}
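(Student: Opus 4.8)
The plan is to reduce the statement to the $M$-ideal and Garkavi machinery already deployed for Theorem 6, the only new ingredient being a discrete, vector-valued replacement for Lemma 5. First I would record the structure of the dual. Since $X$ is reflexive with the metric approximation property and $Y^\ast = \bigoplus_1 Y_i$ is an $\ell^1$-sum of finite dimensional spaces (hence has the metric approximation property), the duality formula gives ${\mathcal K}(X,Y)^\ast = X^\ast {\otimes_{\pi}^{\vee}} Y^\ast$. As the $\ell^1$-sum is distributive for projective tensor products, this becomes
$${\mathcal K}(X,Y)^\ast = X^\ast {\otimes_{\pi}^{\vee}} \Big( \bigoplus_1 Y_i \Big) = \bigoplus_1 W_i, \qquad W_i := X^\ast {\otimes_{\pi}^{\vee}} Y_i,$$
where each $W_i$ is reflexive because $Y_i$ is finite dimensional and $X^\ast$ is reflexive. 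Thus the dual is an $\ell^1$-sum of reflexive summands, the discrete analogue of the splitting $Y^\ast = \ell^1(k) \bigoplus_1 N$ used before.

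The heart of the argument, and the step I expect to be the main obstacle, is the discrete analogue of Lemma 5: \emph{every $SSD$ point of $\bigoplus_1 W_i$ has finite support.} I would prove the contrapositive. If $\tau = (\tau_i)_i$ has infinite support, choose coordinates $i_1, i_2, \dots$ with $\tau_{i_k} \neq 0$ and take the unit vectors $h_k := -\tau_{i_k}/\|\tau_{i_k}\|$, regarded as elements of $B_{{\mathcal K}(X,Y)^\ast}$ supported in the single coordinate $i_k$. Because the norm is additive across coordinates, $\|\tau + t h_k\| = \|\tau\| - t$ for $0 < t \le \|\tau_{i_k}\|$, so the one-sided difference quotient equals $-1$ there and then climbs back toward $+1$; at $t_k := 2\|\tau_{i_k}\| \to 0$ it equals $0$, while the right derivative in the direction $h_k$ is $-1$. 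Hence $\sup_{h \in B}\big| \frac{\|\tau + t h\| - \|\tau\|}{t} - \rho(h) \big|$ does not tend to $0$ as $t \to 0^+$, so the limit defining strong subdifferentiability is not uniform and $\tau$ is not $SSD$. This is exactly the mechanism underlying the atomic description in Lemma 5, transplanted to the vector coefficients $W_i$.

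Next I would pass from \emph{finitely supported} to \emph{contained in a finite sub-sum}. By hypothesis every element of the norm-closed subspace $Z^\bot$ is $SSD$, hence finitely supported. A norm-closed subspace of $\bigoplus_1 W_i$ all of whose vectors are finitely supported must lie in some finite sub-sum $R := \bigoplus_{i \in F} W_i$: after restricting to the countably many coordinates met by a separable piece, write the subspace as the countable increasing union of its intersections with $\bigoplus_{i \le n} W_i$; these are closed, so by the Baire category theorem one of them has nonempty interior and therefore equals the whole subspace. Since $F$ is finite and each $W_i$ is reflexive, $R$ is reflexive; as $Z^\bot \subset R$, the space $Z^\bot = ({\mathcal K}(X,Y)/Z)^\ast$ is reflexive, so $Z$ is factor reflexive.

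Finally I would run the Theorem 6 argument. Being reflexive, $R$ is weak$^\ast$-closed in ${\mathcal K}(X,Y)^\ast$ (its unit ball is weakly compact, hence weak$^\ast$-compact, and Krein--Smulian applies). Put $M := R_\bot = \{ T \in {\mathcal K}(X,Y) : \tau(T) = 0 \ \text{for all}\ \tau \in R \}$, so that $M^\bot = R$. The projection $P$ of ${\mathcal K}(X,Y)^\ast$ onto the complementary summand $\bigoplus_{i \notin F} W_i$ has kernel $R = M^\bot$ and satisfies $\|\tau\| = \|P\tau\| + \|\tau - P\tau\|$ by $\ell^1$-additivity, exhibiting $M$ as an $M$-ideal; hence $M$ is proximinal by Alfsen--Effros. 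Since $Z^\bot \subset R = M^\bot$ we have $M \subset Z$, and $({\mathcal K}(X,Y)/M)^\ast = M^\bot = R$ is reflexive, so ${\mathcal K}(X,Y)/M$ is reflexive. Applying Garkavi's theorem to $M \subset Z \subset {\mathcal K}(X,Y)$ then yields that $Z$ is a proximinal subspace of ${\mathcal K}(X,Y)$, which together with factor reflexivity completes the argument.
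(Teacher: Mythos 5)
Your argument is correct, and its overall architecture coincides with the paper's: establish that every element of $Z^\bot$ is finitely supported in the $\ell^1$-decomposition ${\mathcal K}(X,Y)^\ast=\bigoplus_1 W_i$, use a Baire category argument to trap $Z^\bot$ in a single finite sub-sum $R$, observe that $R$ is reflexive hence weak$^\ast$-closed (giving factor reflexivity), and then run the $M$-ideal plus Garkavi transitivity argument on $M=R_\bot\subset Z$. The one place where you genuinely diverge is the key finite-support lemma. The paper does not compute with difference quotients at all: it invokes the Franchetti--Pay\'a theorem that an $SSD$ point of a dual space attains its norm, applies it with the $c_0$-sum predual $V=\bigoplus_{c_0}(X^\ast\otimes^{\vee}_{\epsilon}Y_i^\ast)$ (noting that the $SSD$ property is intrinsic and does not depend on the choice of predual), and then uses the elementary fact that a norm-attaining functional on an infinite $c_0$-sum must have finite support. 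Your direct estimate --- perturbing $\tau$ along the unit vector $-\tau_{i_k}/\|\tau_{i_k}\|$ supported in a single coordinate with $\|\tau_{i_k}\|\to 0$, so that the difference quotient at $t_k=2\|\tau_{i_k}\|$ is $0$ while the one-sided derivative is $-1$ --- is self-contained and avoids both the cited theorem and the introduction of the auxiliary predual $V$; what the paper's route buys is brevity and a visible link to the atomic description of $SSD$ points in Lemma 5. Your explicit handling of the separability issue in the Baire step (passing to a separable piece to get a countable cover) is in fact more careful than the paper's one-line appeal to Baire category.
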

\begin{proof}
By our assumptions, we have ${\mathcal K}(X,Y)^\ast = X \otimes^{\wedge}_{\pi} Y^\ast$.
And $X \otimes^{\wedge}_{\pi} Y^\ast = \bigoplus_1( X \otimes^{\wedge}_{\pi}Y_i)$.  Further for every $i \in I$, since $Y_i$ is  finite dimensional,  $X \otimes^{\wedge}_{\pi}Y_i$ is a reflexive space.
\vskip 1em
Let $f \in Z^\bot$ be a $SSD$ point. We may and do assume that $f$ has only countably many non-zero coordinates. We note that the definition of a $SSD$ point does not involve the predual of the space .
\vskip 1em
Consider the $c_0$-direct sum,  $ V = \bigoplus_{c_0}(X ^\ast\otimes^{\vee}_{\epsilon}Y^\ast_i)$. We have $V^\ast = \bigoplus_1 X \otimes^{\wedge}_{\pi}Y_i =Y^\ast$.
\vskip 1em
Since $f$ is a $SSD$ point, it attains its norm on $V$. Since $V$ is an infinite $c_0$-sum, it is easy to see that $f$ has at most finitely many non-zero coordinates.
\vskip 1em
Since $Z^{\bot}$ is a Banach space, by the Baire category theorem, it is easy to see that for a finite set $A \subset I$, $f(i) = 0$ for all $i \notin A$ and $f \in Z^{\bot}$.
\vskip 1em
Thus we have the decomposition $X \otimes^{\wedge}_{\pi} Y^\ast = \bigoplus _1 \{X \otimes^{\wedge}_{\pi} Y_i: i \in A\} \bigoplus_i\{X \otimes^{\wedge}_{\pi}Y_i: i \notin A\}$.
And $ Z^\bot \subset \bigoplus _1 \{X \otimes^{\wedge}_{\pi} Y_i: i \in A\} $. Since $Y_i$'s are finite dimensional and $A$ is a finite set, this latter space is a reflexive space and hence weak$^\ast$-closed space. Therefore $Z$ is a factor reflexive space.
\vskip 1em
As before let $M = \{ T \in {\mathcal K}(X,Y): \tau(T) = 0~for~all~\tau
\in \bigoplus _1 \{X \otimes^{\wedge}_{\pi} Y_i: i \in A\} \}$. Now $M$ is a $M$-ideal and hence proximinal subspace. Also $M \subset Z$. Therefore by an application of Garkavi's theorem again, we see that $Z$ is proximinal in ${\mathcal K}(X,Y)$.
\end{proof}
\begin{cor} Let $\{Y_i\}_{i \in I}$ be an infinite family of finite dimensional space.
Let $Y$ be a Banach space such that $Y^\ast = \bigoplus_1 Y_i$. If $Z \subset Y$ is a closed subspace such that $Z^\bot$ consists of $\mathnormal{SSD}$ points, then $Z$ is a finite codimensional strongly proximinal subspace of $Y$.
\end{cor}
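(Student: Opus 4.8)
The plan is to obtain this statement as the scalar-domain instance of Theorem 8 and then to sharpen its conclusion in two places. First I would take $X$ to be the one-dimensional scalar field, which is reflexive and has the metric approximation property; then ${\mathcal K}(X,Y)$ is isometrically $Y$ and ${\mathcal K}(X,Y)^\ast = Y^\ast = \bigoplus_1 Y_i$, so the hypothesis that $Z^\bot$ consists of $SSD$ points is exactly the hypothesis of Theorem 8 in this case. Theorem 8 already yields that $Z$ is factor reflexive and proximinal, so what remains is to record that $Z$ has \emph{finite} codimension and to upgrade proximinality to \emph{strong} proximinality.

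For the finite codimension I would reuse the Baire category step from the proof of Theorem 8. Each $SSD$ point $f \in Z^\bot$ attains its norm on the predual $\bigoplus_{c_0} Y_i^\ast$ (the scalar specialisation of the space $V$ there), and since this is an infinite $c_0$-sum, $f$ has only finitely many non-zero coordinates; the Baire category argument then produces a single finite set $A \subset I$ with $Z^\bot \subset F := \bigoplus_1\{Y_i : i \in A\}$. Because $A$ is finite and each $Y_i$ is finite dimensional, $F$ is finite dimensional, whence $Z^\bot$ is finite dimensional and $Z$ has finite codimension in $Y$. The one thing to check carefully here is that the \emph{same} finite set $A$ works for all of $Z^\bot$ simultaneously, which is precisely what the category argument guarantees.

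The main point, and the step where genuinely new work is needed, is that Garkavi's theorem only transfers \emph{ordinary} proximinality from the $M$-ideal to $Z$, so strong proximinality must be argued separately. Writing $Y^\ast = F \bigoplus_1 N$ with $N = \bigoplus_1\{Y_i : i \notin A\}$ and setting $M = {}^\bot F$, the projection of $Y^\ast$ onto $N$ along $F$ has kernel $F = M^\bot$ and satisfies $\|y^\ast\| = \|Py^\ast\| + \|y^\ast - Py^\ast\|$, so $M$ is an $M$-ideal in $Y$; by \cite{R1}, $M$ is therefore strongly proximinal, and since $M^\bot = F$ is finite dimensional, $M$ has finite codimension in $Y$. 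From $Z^\bot \subset F = M^\bot$ we get $M \subset Z \subset Y$, and an application of Remark 4 (a closed superspace of a finite-codimensional strongly proximinal subspace is again strongly proximinal) delivers that $Z$ is a finite codimensional strongly proximinal subspace of $Y$, completing the argument.
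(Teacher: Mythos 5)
Your proposal is correct and follows essentially the same route as the paper: specialise the argument of Theorem 8 (Baire category step giving a single finite set $A$, hence finite codimension), then use that the $M$-ideal $M$ with $M^\bot = \bigoplus_1\{Y_i : i \in A\}$ is strongly proximinal and of finite codimension, and conclude via Remark 4 since $M \subset Z \subset Y$. Your write-up is in fact more explicit than the paper's two-line proof about why Garkavi's theorem alone would not suffice and where the strong proximinality actually comes from.
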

\begin{proof}
	Proceeding as in the proof of the above theorem, we see that $Z$ is a finite codimensional space. Since $M$-ideals are strongly proximinal, by using Remark 4, we see that $Z$ is a strongly proximinal subspace of $Y$.
\end{proof}
In the case $Y = \bigoplus_{c_0} Y_i^\ast$ for the family $\{Y_i\}_{i \in I}$ of the finite dimensional spaces considered above, we have proximinality
result for ${\mathcal L}(X,Y)$, when $X$ is a reflexive Banach space with the approximation property.
\begin{thm} Let $X$ and $Y$ be as above. Let $Z \subset Y$ be a proximinal subspace of finite codimension. ${\mathcal K}(X,Z)$ is a proximinal subspace of
${\mathcal L}(X,Y)$.
\end{thm}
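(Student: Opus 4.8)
The plan is to exploit the $M$-summand decomposition of $Y$ induced by $Z$ and thereby split the approximation problem into a finite dimensional (reflexive) block and a $c_0$-tail block, each independently solvable. First I would record the identifications coming from $X$ being reflexive with the metric approximation property: writing $E_i = {\mathcal L}(X,Y_i^\ast)$, a reflexive space since $Y_i^\ast$ is finite dimensional, one has ${\mathcal K}(X,Y) = \bigoplus_{c_0} E_i$, while ${\mathcal L}(X,Y)$ is the intermediate space $\{(T_i) : \sup_i\|T_i\| < \infty,\ T_i x \to 0\ \forall x\}$ sitting between $\bigoplus_{c_0}E_i$ and $\bigoplus_\infty E_i$, carrying the norm $\|(T_i)\| = \sup_i \|T_i\|$ (the suprema over $x$ and over $i$ commute).

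The first genuine step is to locate the finite block. Since $Z$ is proximinal of finite codimension and $Y/Z$ is reflexive, Garkavi's theorem applied to $Z \subset \ker f \subset Y$ shows that for every $f \in Z^\bot$ the hyperplane $\ker f$ is proximinal, hence $f$ attains its norm on $B_Y$; but a norm attaining functional in $Y^\ast = \bigoplus_1 Y_i$ must be finitely supported. Thus $Z^\bot \subset \bigoplus_1\{Y_i : i \in A\}$ for a finite set $A \subset I$. Writing $Y = Y_A \oplus_\infty Y_{A^c}$ with $Y_A = \bigoplus\{Y_i^\ast : i \in A\}$ finite dimensional and $Y_{A^c} = \bigoplus_{c_0}\{Y_i^\ast : i \notin A\}$, the fact that the defining functionals see only the $A$-coordinates yields $Y_{A^c} \subset Z$ and $y \in Z \Leftrightarrow Q_A y \in Z_A$, where $Z_A = Z \cap Y_A$ and $Q_A$ is the coordinate projection.

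Now I would decouple. For $T \in {\mathcal L}(X,Y)$ and $S \in {\mathcal K}(X,Z)$ one has $\|T-S\| = \max(\max_{i \in A}\|T_i - S_i\|,\ \sup_{i \notin A}\|T_i - S_i\|)$, and the membership constraints on $S$ split as a product: the $A$-block must satisfy $Q_A S \in {\mathcal L}(X,Z_A)$, while the $A^c$-block is free apart from $\|S_i\| \to 0$ (since $Y_{A^c} \subset Z$ imposes no range condition and the finite head is irrelevant to compactness). Hence $d(T,{\mathcal K}(X,Z)) = \max(d_A, d_{A^c})$, with $d_A = d(Q_A T, {\mathcal L}(X,Z_A))$ computed in the reflexive space ${\mathcal L}(X,Y_A) = \bigoplus_\infty\{E_i : i \in A\}$ and $d_{A^c} = d(T^{A^c}, {\mathcal K}(X,Y_{A^c}))$ in ${\mathcal L}(X,Y_{A^c})$, and concatenating block-wise best approximations produces a best approximation to $T$. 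The $A$-block is immediate, since a closed subspace of a reflexive space is proximinal, so $d_A$ is attained.

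The main obstacle is the tail block: I must show ${\mathcal K}(X,Y_{A^c}) = \bigoplus_{c_0}\{E_i : i \notin A\}$ is proximinal in ${\mathcal L}(X,Y_{A^c})$. This is precisely the statement that the $c_0$-sum is proximinal in any space squeezed between it and the $\ell^\infty$-sum, and it is here that a genuinely non-compact operator is approximated by a compact one. Concretely $d_{A^c} = \limsup_{i \notin A}\|T_i\|$, and the explicit truncation $S_i = (1 - d_{A^c}/\|T_i\|)_+\, T_i$ gives a family with $\|S_i\| \to 0$, hence a compact operator, realizing the distance; this is the $M$-ideal of $\bigoplus_{c_0}E_i$ in $\bigoplus_\infty E_i$ made concrete, and the best approximant lands in ${\mathcal K}(X,Y_{A^c})$, which lies inside the intermediate space ${\mathcal L}(X,Y_{A^c})$, so the distance is unchanged. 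Combining the two block solutions then gives $S \in {\mathcal K}(X,Z)$ with $\|T-S\| = \max(d_A,d_{A^c}) = d(T,{\mathcal K}(X,Z))$, which would complete the proof.
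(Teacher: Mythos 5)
Your proposal is correct and follows essentially the same route as the paper: Garkavi plus norm-attainment to show $Z^\bot$ is finitely supported, the splitting $Z = Z_A \oplus_\infty Y_{A^c}$ into a finite-dimensional head handled by reflexivity and a $c_0$-tail handled by the proximinality of the $c_0$-sum inside the $\ell^\infty$-sum. The only difference is presentational: where the paper passes through $T \mapsto T^\ast$ and tensor-product identifications and cites \cite{HWW} for the $c_0$-in-$\ell^\infty$ step, you exhibit the explicit truncation $S_i = (1 - d/\|T_i\|)_+ T_i$ directly.
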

\begin{proof}
To keep the arguments simple, we assume that $I$ is the set of positive integers.
Let $Z = \cap_1^k ker(\alpha_i)$ for some $\{\alpha_i\}_{1 \leq i \leq k} \subset \bigoplus_1 Y_i$. Since $Z$ is a proximinal subspace, using Garkavi's theorem, we have that $ker(\alpha_i)$ is a proximinal subspace, for all $i$. Hence $\alpha_i$'s attain their norm on $Y$.
	
	\vskip 1em
	
	By arguments indicated before, we may assume that there is a $n>0$, $\alpha_i(j)=0$
	for $j>n$ and $1 \leq i \leq k$.
	
	If $M = \bigoplus_{c_0}\{Y^\ast_i:~i> n\}$,
	then $Z = F \bigoplus_{\infty} M$, for some finite dimensional space $F \subset \bigoplus_{\infty} \{Y_i^\ast: 1 \leq i \leq n\}$.
	\vskip 1em
	It is easy to see, since $X$ has the approximation property,  that ${\mathcal K}(X,Z) =
	{\mathcal L}(X, F)\bigoplus_{\infty} {\mathcal K}(X,M)$.
	It is also easy to see that ${\mathcal K}(X,M)$ can be identified with $\bigoplus_{c_0}\{X^\ast \otimes_{\epsilon}^{\vee} Y_i^\ast\}$ where the  sum is taken over $i>n$ . We note that the component spaces, $X^\ast \otimes_{\epsilon}^{\vee} Y_i^\ast$ are reflexive.
	
	\vskip 1em
	We further note that ${\mathcal L}(X, F)$ is a reflexive space.
	
	\vskip 1em
	Since $Y = \bigoplus_{\infty}\{Y^\ast_i: 1 \leq i \leq n\} \bigoplus_{\infty} M$, by similar reasoning, we have, ${\mathcal L}(X,Y)= {\mathcal L}(X, \bigoplus_{\infty}\{Y^\ast_i: 1 \leq i \leq n\})
	\bigoplus_{\infty}  {\mathcal L}(X,M)$.
	\vskip 1em
We now compare the components in the direct sums in ${\mathcal K}(X,Z)$ and ${\mathcal L}(X,Y)$.
The first component of ${\mathcal K}(X,Z)$, being a reflexive space is a proximinal subspace of the first component of ${\mathcal L}(X,Y)$.
\vskip 1em
We note that $T \rightarrow T^\ast$ maps ${\mathcal L}(X,M)$ isometrically into ${\mathcal L}(M^\ast, X^\ast) = (M^\ast \otimes_{\pi}^{\wedge} X)^\ast $.
\vskip .5em
Since projective tensor product is distributive over $\ell^1$-sums, we have $M^\ast \otimes_{\pi}^{\wedge} X = \bigoplus_1(Y_i \otimes_{\pi}^{\wedge} X)$.
Since $Y_i$'s are finite dimensional, we thus have ${\mathcal L}(M^\ast,X^\ast) = \bigoplus_{\infty}(X^\ast \otimes_{\epsilon}^{\vee} Y_i)$.
Here the sums in both cases are taken over $i>n$.
\vskip 1em
It is well known that $c_0$-sum of a family of spaces is always a proximinal subspace of $\ell^\infty$-sum of the same collection (see the proof of Proposition II.1 in \cite{HWW}).
Since ${\mathcal K}(X,M) \subset {\mathcal L}(X,M)\subset \bigoplus_{\infty}(X^\ast \otimes_{\epsilon}^{\vee} Y_i)$, we conclude that
${\mathcal K}(X,M)$ is a proximinal subspace of ${\mathcal L}(X,M)$.
\vskip .5em
Since the component spaces are $\ell^\infty$-sums, we get the conclusion ${\mathcal K}(X,Z)$ is a proximinal subspace of ${\mathcal L}(X,Y)$.
\end{proof}

\end{document}